\definecolor{newcolor}{rgb}{.8,.349,.1}
\def\Fc{{\mathcal F}}
\def\Sc{{\mathcal S}}
\def\H{{\mathcal H}}
\def\kb{{\mathbf k}}
\def\1{{\mathbf 1}}
\def\xb{{\mathbf x}}
\def\lcm{{\rm lcm}}
\def\prob{{\rm prob}}
\newtheorem{Theorem}{Theorem}[section]
\newtheorem{Proposition}[Theorem]{Proposition}
\newtheorem{Corollary}[Theorem]{Corollary}
\newtheorem{Remark}[Theorem]{Remark}
\newtheorem{Example}[Theorem]{Example}
\begin{document}

\begin{frontmatter}

\title{Efficient multicut enumeration of $k$-out-of-$n$:F and consecutive $k$-out-of-$n$:F systems}

\author[1]{Fatemeh {Mohammadi}}
\author[2]{Eduardo {S\'aenz-de-Cabez\'on}\corref{cor1}} 
\ead{eduardo.saenz-de-cabezon@unirioja.es}
\author[3]{Henry P.  {Wynn}}

\address[1]{School of Mathematics, University of Bristol, Bristol, BS8 1TW, UK}
\address[2]{Universidad de La Rioja, Departamento de Matem\'aticas y Computaci\'on, Logro\~no 26005, Spain}
\address[3]{Department of Statistics, London School of Economics, London, WC1X 9BA, UK }

\begin{abstract}
We study multiple simultaneous cut events for  $k$-out-of-$n$:F and linear consecutive $k$-out-of-$n$:F systems in which each component has a constant failure probability. We list the multicuts of these systems and describe the structural differences between them. Our approach, based on combinatorial commutative algebra, allows complete enumeration of  the sets of multicuts for both kinds of systems. We also analyze the computational issues of multicut enumeration and reliability computations. 
\end{abstract}

\begin{keyword}
\MSC 41A05\sep 41A10\sep 65D05\sep 65D17
\KWD $k$-out-of-$n$ \sep consecutive $k$-out-of-$n$ \sep multiple failures \sep multi cuts \sep algebraic reliability \sep monomial ideals \sep Hilbert series

\end{keyword}

\end{frontmatter}


A system $S$ is formed by several components each of which can perform at different levels with different probabilities. The performance of the system is the result of the combination of the performance states of its components. A system is called {\em coherent} if the degradation of the performance of any of its components cannot lead to an improvement of the system's performance and the improvement of any component cannot lead to degradation of the system's performance. Coherent systems include most industrial and biological systems, networks or workflow diagrams. The reliability of a coherent system $S$ is the probability that the system is performing at a certain working level. The unreliability of $S$ is defined as the probability that it is performing below the required level. Here we focus on system's unreliability, but our methods can directly be applied to its reliability. The study of system (un)reliability is a central area in engineering. In the recent years new mathematical methods and approaches have been developed including combinatorial methods, the study of correlated probabilities or minimal path and cut enumeration, cf. the recent papers \cite{LZXLW16, GN17, SZMZG16, WWXC17, LB17, BAGT16}.

Let $S$ be a coherent system with $n$ components, each of which has a constant probability of failure $p_i,\, i\in\{1,\dots,n\}$. If all $p_i$'s are equal and independent then the system is said to have i.i.d. (independent and identically distributed) components. We will consider that the failure probabilities of the components are mutually independent but do not need to be identical. The method does however apply to more general situations in which independency is not assumed. For the system $S$ we define a set of {\em minimal} failure states, i.e. states under which the system is failed and such that the improvement of any of its components would lead the system to a working state. We call such minimal failure states {\em minimal cuts}, while any failure state is simply called a {\em cut}. The unreliability of $S$ is described as the probability that at least one minimal failure event occurred.

We are interested in the behaviour of  the system $S$ under several simultaneous failure events. By studying the distribution of the variable $Y=\{\mbox{number of minimal failure events}\}$ we obtain further information about the structure and behaviour of $S$. This is relevant for the management of spare parts, supplies and maintenance of the system. The consideration of simultaneous failures is important for instance in high performance computational systems, see \cite{TNLP13}. Analogous to cuts we define $i$-multicuts as those system states that correspond to $i$ simultaneous cuts. A minimal $i$-multicut is a state in which the system is suffering $i$ simultaneous cuts but the improvement of any component makes the number of simultaneous cuts strictly smaller than $i$. Consider for instance the {\em series} system $S$ with $3$ components. A minimal cut of $S$ is for instance $\{1\}$ which means that the component $1$ is failing. The cut given by $\{1,2,3\}$ is a non-minimal cut, a non-minimal $2$-multicut and a minimal $3$-multicut for $S$.

\begin{center}
\begin{tikzpicture}[scale=0.6, transform shape]
\fill (0,0) circle(3pt);
\fill (2,0) circle(3pt);
\fill (4,0) circle(3pt);
\fill (6,0) circle(3pt);
\draw (0,0)--(2,0) node[midway,above]{1};
\draw (2,0)--(4,0)node[midway,above]{2};
\draw (4,0)--(6,0)node[midway,above]{3};
\end{tikzpicture}
\end{center}
 
The probability that a system $S$ is suffering at least $i$ simultaneous failures is the union event of all minimal $i$-multicuts of $S$. A  first step to compute this probability is to enumerate the combinations of $i$ simultaneous failures for each $i\geq 1$ and then compute the corresponding probabilities. This enumeration is hard in general for the number of minimal $i$-multicuts can grow exponentially.

$k$-out-of-$n$:F and consecutive $k$-out-of-$n$:F systems (Cons/$k$-out-of-$n$:F) are fundamental examples of coherent systems that generalize the basic series and parallel systems. These systems are ubiquitously studied and applied because of their redundancy, which is useful in a large variety of situations, see \cite{ZLW00, HZF03, E10, GGK16}. The tool we use to analyze multicuts and unreliability of those systems is the algebraic approach described by the authors in \cite{SW09,SW10,SW11,SW15}. This approach consists in associating a monomial ideal $I_S$ to the system $S$ and analyze the features of $S$ by studying the properties of $I_S$.
The first section of the paper describes the algebraic approach to system reliability and its application to the problem of simultaneous failure events.  In \S\ref{sec:kn} and \S\ref{sec:consecutive} we focus on $k$-out-of-$n$:F systems and Cons/$k$-out-of-$n$:F systems, and give a description of their $i$-multicut structures, demonstrating their significant differences. Finally in \S\ref{sec:computations} we consider some of the computational aspects of the problem and show the results of computer experiments based on the implementation of the tools previously described.

\section{Algebraic reliability primer}\label{sec:introduction}
Let $S$ be a coherent system with $n$ components. For ease of notation we assume that $S$ is a binary system, i.e. the system and each of its components $c_1,\dots,c_n$ can be either failed $(1)$ or working $(0)$. However, this method works also for multistate components. Let $\Fc_S$ be the set of failure states (cuts) of $S$ and let $\overline{\Fc}_S$ be the set of minimal cuts. Let $R=\kb[x_1,\dots,x_n]$ be a polynomial ring over a field $\kb$. Each state of $S$ corresponds to a monomial in $R$: the monomial $x_{i_1}\cdots x_{i_r}$ for $1\leq i_1<\cdots<i_r\leq n$ corresponds to the state in which components $c_{i_1},\dots , c_{i_r}$ are failed and the rest are working. The coherence property of $S$ is equivalent to the fact that the elements of $\Fc_S$ correspond to the monomials in a monomial ideal, the  {\em failure ideal} of $S$, which we denote by $I_S$. The unique minimal monomial generating set of $I_S$ is formed by the monomials corresponding to the elements of $\overline{\Fc}_S$, see \cite[\S 2]{SW09}. Obtaining the set of minimal cuts of $S$ amounts to compute the minimal generating set of $I_S$.
 In order to compute the unreliability of $S$ we can use the numerator of the Hilbert series of $I_S$ which gives a formula, in terms of $x_1,\dots,x_n$ that enumerates all the monomials in $I_S$, i.e. the monomials corresponding to states in $\Fc_S$. Hence, computing the (numerator of the) Hilbert series of $I_S$ provides a way to compute the unreliability of $S$ by substituting $x_i$ by $p_i$, the failure probability of the component $i$ as explored in \cite[\S 2]{SW09}. Furthermore, in order to have a formula that can be truncated to obtain bounds for the reliability in the same way that the inclusion-exclusion formula is truncated to obtain the so-called Bonferroni bounds, we need a special way to write the numerator of the Hilbert series of $I_S$. This convenient form is given by the alternating sum of the ranks in any free resolution of $I_S$. Every monomial ideal has a {\em minimal} free resolution, which provides the tightest bounds among the aforementioned ones. In general, the closer the resolution is to the minimal one, the tighter the bounds obtained, see e.g. section $3$ in \cite{SW09}.

In summary, the algebraic method for computing the unreliability of a coherent system $S$ works as follows: 
\begin{itemize}
\item[-]{Associate to the system $S$ its failure ideal $I_S$ and obtain the minimal generating set of $I_S$ to get the set $\overline{F}_S$ of minimal cuts of $S$.}
\item[-]{Compute the Hilbert series of $I_S$ to have the unreliability of $S$.}
\item[-]{Compute any free resolution of $I_S$. The alternating sum of the ranks of this resolution gives a formula for the Hilbert series of $I_S$, i.e. the unreliability of $S$ which provides bounds by truncation at each summand. If the computed resolution is minimal, then these bounds are the tightest of this type.}
\end{itemize}

Observe that this method works without alterations if the failure probabilities of the components of $S$ are mutually different. For more details and the proofs of the results enunciated here, see \cite{SW09}. For more applications of this method in reliability analysis, see \cite{SW10,SW11,SW15}.

When studying multiple simultaneous failures in a coherent system, the above methods can be extended to a filtration of ideals associated to the system under study as follows (cf. \cite{MSW17}): Let $S$ be a coherent system in which several minimal failures can occur at the same time. Let $Y$ be the number of such simultaneous failures. The event $\{Y\geq 1\}$ is the event that at least one elementary failure event occurs, which is the same as the event that the system fails. If $x^\alpha$ and $x^\beta$ are the monomials corresponding to two elementary failure events, then $\lcm(x^\alpha,x^\beta)=x^{\alpha\vee\beta}$ corresponds to the intersection of the two events. The full event $\{Y\geq 2\}$ corresponds to the ideal generated by all such pairs. The argument extends to $\{Y\geq i\}$  and the study of the tail probabilities $\prob\{Y\geq i\}$. 
For each $i$, the set of $i$-multicuts generates an ideal, which altogether form a filtration. Let $I\subseteq \kb [x_1,\dots,x_n]$ be a monomial ideal and $\{m_1,\dots,m_r\}$ its minimal monomial generating set. Let $I_i$ be the ideal generated by the least common multiples of all sets of $i$ distinct monomial generators of $I$, i.e. 
$I_i=\langle \lcm(\{m_j\}_{j\in\sigma}) : \sigma\subseteq\{1,\dots,r\}, \vert\sigma\vert=i\rangle$.
We call $I_i$ the \emph {$i$-fold $\lcm$-ideal of $I$}. The ideals $I_i$ form a descending filtration $I=I_1\supseteq I_2\supseteq \cdots \supseteq I_r$, which we call the \emph{$\lcm$-filtration} of $I$. If $I=I_S$ is the failure ideal of a system $S$ then the minimal generating set of ${I}_i$ is formed by the monomials corresponding to minimal $i$-multicuts of $S$.
The survivor functions $F(i)=\prob\{Y\geq i\}$ for a coherent system, are obtained from the multigraded Hilbert function of the $i$-fold $\lcm$-ideal ${I}_i$. The above considerations can be summarized in the following proposition.

\begin{Proposition}
Let $Y$ be the number of failure events of a system $S$. If $\{ m_1,\dots, m_r\}$ is the set of monomial minimal generators of the failure ideal $I_S=I_1$
then $\prob\{Y\geq i\}=\mbox{E} [\1_{M_i}(\alpha)]=\H_{I_i}(\xb)$,
where $\1_{M_i}$ is the indicator function of the exponents of monomials in the $i$-fold $\lcm$-ideal
$I_i$
and $\H_{I_i}(\xb)$ is the numerator of its multigraded Hilbert series.
\end{Proposition}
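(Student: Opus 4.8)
The plan is to split the statement into a purely combinatorial identity, $\{Y\ge i\}=\{\alpha\in M_i\}$, and the already-recorded fact (the $i=1$ case is in \cite{SW09}) that evaluating the numerator of the multigraded Hilbert series of a squarefree monomial ideal at the component failure probabilities returns the probability of landing in that ideal. \emph{First step.} Model the random state of $S$ as a vector $\alpha\in\{0,1\}^n$ with $\alpha_j=1$ (component $c_j$ failed) with probability $p_j$, the $\alpha_j$ mutually independent, and write $x^\alpha$ for the associated squarefree monomial. By definition $Y$ is the number of minimal failure events that occurred, i.e. $Y=\#\{\,j\in\{1,\dots,r\}: m_j\mid x^\alpha\,\}$. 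The claim is that $x^\alpha\in I_i$ if and only if $Y\ge i$: indeed $x^\alpha\in I_i$ iff some generator $\lcm(\{m_j\}_{j\in\sigma})$ with $\sigma\subseteq\{1,\dots,r\}$, $|\sigma|=i$, divides $x^\alpha$ — and here it is irrelevant that this generating set need not be minimal — iff there is such a $\sigma$ with $m_j\mid x^\alpha$ for every $j\in\sigma$, iff at least $i$ of the $m_j$ divide $x^\alpha$, iff $Y\ge i$. Hence $\1_{M_i}(\alpha)=\1_{\{Y\ge i\}}$, and taking expectations gives $\prob\{Y\ge i\}=\mathrm E[\1_{M_i}(\alpha)]$.

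\emph{Second step.} Because $S$ is a binary coherent system, $I_S=I_1$ is generated by squarefree monomials, and least common multiples of squarefree monomials are squarefree, so $I_i$ is itself a squarefree monomial ideal. Therefore membership $x^\alpha\in I_i$ depends only on $\tau:=\mathrm{supp}(\alpha)$; setting $\mathcal D_i=\{\tau\subseteq\{1,\dots,n\}: x^\tau\in I_i\}$, the multigraded Hilbert series of $I_i$ is $\sum_{x^\alpha\in I_i}\xb^\alpha=\sum_{\tau\in\mathcal D_i}\prod_{j\in\tau}\frac{x_j}{1-x_j}$, and putting this over the common denominator $\prod_{j=1}^n(1-x_j)$ identifies the numerator as $\H_{I_i}(\xb)=\sum_{\tau\in\mathcal D_i}\prod_{j\in\tau}x_j\prod_{j\notin\tau}(1-x_j)$. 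Substituting the failure probabilities $x_j=p_j$, independence makes each summand exactly $\prob\{\mathrm{supp}(\alpha)=\tau\}$, so the sum equals $\prob\{\mathrm{supp}(\alpha)\in\mathcal D_i\}=\mathrm E[\1_{M_i}(\alpha)]$. Chaining the two steps gives the proposition; this second step is literally the unreliability computation recalled from \cite{SW09} for $I_S$, applied verbatim to $I_i$ (see also \cite{MSW17}).

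\emph{Main difficulty.} There is no genuine obstacle here — the work is bookkeeping. The two points requiring care are: in the first step, that the equivalence between $Y\ge i$ and $x^\alpha\in I_i$ uses only a generating set of $I_i$, so the possible redundancies among the $\lcm$'s of $i$-element subsets are harmless; and in the second step, that turning the Hilbert-series numerator into a probability relies both on the squarefreeness of $I_i$ (hence on the binary-component hypothesis on $S$) and on the mutual independence of the $p_j$, which is what permits writing $\prob\{\mathrm{supp}(\alpha)=\tau\}=\prod_{j\in\tau}p_j\prod_{j\notin\tau}(1-p_j)$.
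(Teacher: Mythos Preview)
Your argument is correct and matches the paper's approach: the paper does not give a separate proof of this proposition but presents it as a summary of the preceding paragraph (together with the Hilbert-series/unreliability identification from \cite{SW09} and the extension to the $\lcm$-filtration from \cite{MSW17}). Your two steps --- identifying $\{Y\ge i\}$ with membership of $x^\alpha$ in $I_i$ via the $\lcm$ generators, and then invoking the squarefree Hilbert-series computation --- are precisely a detailed write-up of that same reasoning.
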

As a result, we also obtain identities and lower and upper bounds for multiple failure probabilities from free resolutions as in the single failure case.

\section{Multiple simultaneous failures in $k$-out-of-$n$:F systems}\label{sec:kn}
A $k$-out-of-$n$:F system denoted by $S_{k,n}$ is a system with $n$ components that fails whenever $k$ of them fail. The failure ideal of $S_{k,n}$ is given by $I_{k,n}=\langle \prod_{i\in\sigma} x_i:\  \sigma \subseteq \{1,\dots,n\}, \vert\sigma\vert=k\rangle$. Let $I_{k,n,i}$ be the $i$-fold $\lcm$-ideal of $I_{k,n}$. 

\begin{Theorem}\label{thm:kn}
For all $k<j\leq n$ and ${j-1\choose k}<l\leq{j \choose k}$ we have that $I_{k,n,l}=I_{j,n}=\langle \prod_{s\in\sigma} x_s:\ \sigma \subseteq \{1,\dots,n\}, \vert\sigma\vert=j\rangle$
\end{Theorem}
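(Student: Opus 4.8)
The plan is to reduce the statement to an elementary counting fact about finite sets, using that every monomial in sight is squarefree. First I would record that $I_{k,n}$ is minimally generated by the squarefree monomials $x_\sigma:=\prod_{s\in\sigma}x_s$ with $\sigma\subseteq\{1,\dots,n\}$ and $\vert\sigma\vert=k$, and that for squarefree monomials the least common multiple is supported on the union of the supports, $\lcm(x_{\sigma_1},\dots,x_{\sigma_l})=x_{\sigma_1\cup\cdots\cup\sigma_l}$. Hence $I_{k,n,l}$ is generated (a priori not minimally) by the monomials $x_\tau$ for which $\tau$ is a union of $l$ pairwise distinct $k$-subsets of $\{1,\dots,n\}$, whereas $I_{j,n}$ is generated by the squarefree monomials of degree $j$.

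The one combinatorial input I would use is that a finite set $\tau$ contains exactly ${\vert\tau\vert\choose k}$ subsets of size $k$, together with the monotonicity of $m\mapsto{m\choose k}$, which is strictly increasing on $\{m\geq k\}$ and vanishes for $m<k$. The hypothesis ${j-1\choose k}<l\leq{j \choose k}$ says exactly that $j$ is the least $m$ with ${m\choose k}\geq l$, and each of these two inequalities feeds one of the two inclusions.

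For $I_{k,n,l}\subseteq I_{j,n}$: let $x_\tau$ with $\tau=\sigma_1\cup\cdots\cup\sigma_l$ be one of the above generators of $I_{k,n,l}$. Then $\tau$ contains the $l$ pairwise distinct $k$-subsets $\sigma_1,\dots,\sigma_l$, so ${\vert\tau\vert\choose k}\geq l>{j-1\choose k}$ and therefore $\vert\tau\vert\geq j$; any $j$-subset $\tau'\subseteq\tau$ then yields a generator $x_{\tau'}$ of $I_{j,n}$ dividing $x_\tau$. For $I_{j,n}\subseteq I_{k,n,l}$: let $x_\tau$ with $\vert\tau\vert=j$ be a generator of $I_{j,n}$. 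Since ${j\choose k}\geq l$, the set $\tau$ admits $l$ pairwise distinct $k$-subsets $\sigma_1,\dots,\sigma_l$, and then $x_{\sigma_1\cup\cdots\cup\sigma_l}$ is a generator of $I_{k,n,l}$ dividing $x_\tau$. Combining the two inclusions gives the asserted equality, and since $I_{j,n}$ is minimally generated by the squarefree monomials of degree $j$ we also recover the stated generating set.

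I do not expect a genuine obstacle here: once the observation that $\lcm$ of squarefree monomials corresponds to union of supports is in place, the theorem is bookkeeping with binomial coefficients. The only points needing a little care are small ones --- keeping the distinction between ``lies in the ideal'' and ``is a minimal generator'' (only the former is used in the inclusions), invoking the correct one of the two defining inequalities on each side, and checking the boundary cases where $j$ is close to $k$ or to $n$, which are absorbed by the convention ${m\choose k}=0$ for $m<k$ and by the standing hypothesis $k<j\leq n$.
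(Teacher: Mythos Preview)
Your argument is correct and rests on the same counting fact as the paper's proof, namely that a set of size $m$ contains exactly ${m\choose k}$ subsets of size $k$, together with the monotonicity of $m\mapsto{m\choose k}$. Both proofs establish the two inclusions $I_{k,n,l}\subseteq I_{j,n}$ and $I_{j,n}\subseteq I_{k,n,l}$ from this.

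Where your write-up differs, and is in fact cleaner, is in the second inclusion. The paper works to show that every $j$-set $\sigma$ can be realised \emph{exactly} as a union of $l$ distinct $k$-subsets for each $l$ in the range ${j-1\choose k}<l\leq{j\choose k}$; it does this by fixing an element $i\in\sigma$, taking all ${j-1\choose k}$ $k$-subsets of $\sigma\setminus\{i\}$, and then adjoining between $1$ and ${j-1\choose k-1}$ further $k$-subsets that contain $i$, invoking Pascal's identity ${j-1\choose k}+{j-1\choose k-1}={j\choose k}$ to cover the whole range. You bypass this: since ${j\choose k}\geq l$, you simply pick any $l$ distinct $k$-subsets of $\sigma$ and observe that their union is \emph{contained in} $\sigma$, so the corresponding generator of $I_{k,n,l}$ divides $x_\sigma$. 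That is all the ideal-theoretic inclusion needs, and it avoids the Pascal-identity detour. The price is that you do not explicitly exhibit each $x_\sigma$ as one of the defining $\lcm$'s of $I_{k,n,l}$, but the theorem does not require that.
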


\begin{proof}
Let $k<j\leq n$. We want to see that $I_{j,n}=I_{k,n,l}$ for all $l$ such that ${{j-1}\choose k}< l\leq{j\choose k}$. This amounts to show that for all such $l$ the following apply:
\begin{enumerate}
\item[(i)]{Any subset of $\{1,\dots,n\}$ of cardinality $j$ (i.e. every minimal generator of $I_{j,n}$) can be obtained as the union of $l$ subsets of $\{1,\dots,n\}$ of cardinality $k$.}
\item[(ii)]{No such subset of cardinality $j$ can be obtained as the union of more than ${j\choose k}$ subsets of cardinality $k$.}
\item[(iii)]{Taking any union of less than ${{j-1}\choose k}+1$ subsets of cardinality $k$ produces some sets of cardinality less than $j$.}
\end{enumerate}

To see (i) observe that to obtain a set of cardinality $j$ we only need the union of $\lceil \frac{j}{k}\rceil$ sets of cardinality $k$. Just consider $\lfloor\frac{j}{k}\rfloor$ sets of $k$ disjoint elements, which is always possible, and add another one which contains the remaining $j-\lfloor\frac{j}{k}\rfloor$ elements.
To see (ii) observe that any union of more than ${j\choose k}$ sets of cardinality $k$ has at least cardinality $j+1$. This is because for every set of $j$ elements, we can choose $k$ of them in at most ${j\choose k}$ ways. Hence if we have more than ${j\choose k}$ sets of cardinality $k$, their union must have more than $j$ elements.
To see (iii) consider any set of $j-1$ elements. These can be chosen in sets of cardinality $k$ in ${{j-1}\choose k}$ ways. The union of all these ${{j-1}\choose k}$ sets has then $j-1$ elements.

Using (i), (ii) and (iii) we have that for all ${{j-1}\choose k}+1\leq l\leq{j\choose k}$ $I_{k,n,l}$ is generated by some subsets of $\{1,\dots,n\}$ all of which have cardinality $j$. We still have to check that it is generated by all such sets. For this, take any subset $\sigma\subseteq\{1,\dots,n\}$ such that $\vert\sigma\vert=j$. Let $i$ be an element of $\sigma$ and $\sigma'=\sigma - {i}$. We can form $\sigma$ as the union of $l$ sets of cardinality $k$ for all ${{j-1}\choose k}+1\leq l\leq{j\choose k}$. Observe that $\sigma'$ can be obtained as the union of ${{j-1}\choose k}$ sets of cardinality $k$. Now, we can add to this union any number of sets of cardinality $k$ that contribute only with the element $i$ to it. We can do this with any number of sets from $1$ to ${{j-1}\choose{k-1}}$. Since ${{j-1}\choose k}+{{j-1}\choose{k-1}}={j\choose k}$ the proof is complete.
%
\end{proof}

Theorem \ref{thm:kn} shows that the minimal multicuts of $k$-out-of-$n$:F systems are ordinary minimal cuts for other $j$-out-of-$n$:F systems. On the other hand, the simultaneous failure ideals follow a {\em staircase} pattern, in the sense that their reliability is the same for several numbers of simultaneous failures until a certain particular number occurs.
Theorem~\ref{thm:kn} also enables us to study the reliability polynomial applying geometric techniques developed in \cite{mohammadi1}.
The fact that the ideals in the lcm-filtration of $I_{k,n}$ correspond to the failure ideals of other $j$-out-of-$n$ systems allows us to obtain reliability identities and bounds by making use of the known formulae for the Betti numbers of these systems, cf. \cite{SW09}.
\begin{Corollary}
In the case of i.i.d. components, the graded Betti numbers of $I_{k,n,i}$ are given by 
\[
\beta_{a,a+j}(I_{k,n,i})=\beta_{a,a+j}(I_{j,n})={{n} \choose {j+a}} {{a+j-1} \choose {j-1}}
\]
for all ${j-1\choose k}<i\leq{j \choose k}$ and $0\leq a\leq n-j$.
For $b\neq a+j$ we have $\beta_{a,b}(I_{k,n,i})=0$.
For the multigraded Betti numbers, we have that
\[\beta_{a,\alpha}(I_{k,n,i})=\beta_{a,\alpha}(I_{j,n})={{a+j-1} \choose {j-1}}
\]
if $\alpha$ is the product of $a+j$ variables and $0$ otherwise.
\end{Corollary}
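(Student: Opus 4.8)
The plan is to read the corollary straight off Theorem~\ref{thm:kn}. That theorem gives an \emph{equality of ideals}, $I_{k,n,l}=I_{j,n}$, for every $l$ with ${j-1\choose k}<l\le{j\choose k}$; since the graded and multigraded Betti numbers of an ideal are invariants of the ideal itself, we get $\beta_{a,b}(I_{k,n,i})=\beta_{a,b}(I_{j,n})$ and $\beta_{a,\alpha}(I_{k,n,i})=\beta_{a,\alpha}(I_{j,n})$ for free in the stated range of $i$. So the entire statement reduces to recalling the minimal free resolution of the failure ideal $I_{j,n}$ of the ordinary $j$-out-of-$n$:F system.

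For that step I would invoke the Betti number computation for $k$-out-of-$n$:F systems already carried out in \cite{SW09}. Concretely, $I_{j,n}$ is the ideal generated by all squarefree monomials of degree $j$ in $n$ variables; it has a linear minimal free resolution, and $\beta_{a,a+j}(I_{j,n})={n\choose j+a}{a+j-1\choose j-1}$ for $0\le a\le n-j$, with $\beta_{a,b}(I_{j,n})=0$ whenever $b\ne a+j$ (this vanishing is exactly the linearity of the resolution). If one prefers a self-contained derivation, a minimal $a$-th syzygy of $I_{j,n}$ is indexed by an $(a+j)$-subset $\tau\subseteq\{1,\dots,n\}$ together with a combinatorial datum counted by ${a+j-1\choose j-1}$ (the top Betti number of the analogous ideal in just $a+j$ variables); summing over the ${n\choose a+j}$ choices of $\tau$ yields the graded numbers, and isolating the single multidegree $\alpha=\prod_{s\in\tau}x_s$ yields $\beta_{a,\alpha}(I_{j,n})={a+j-1\choose j-1}$, with $0$ in every multidegree that is not squarefree of degree $a+j$.

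The remaining work is bookkeeping rather than mathematics: checking that ${n\choose j+a}$ is nonzero precisely on $0\le a\le n-j$, so that the quoted range of $a$ is the right one, and verifying consistency at the extreme values of $i$ and at $j=n$ (where $I_{j,n}$ is principal and the formula collapses to $\beta_{0,n}=1$, everything else zero). I do not anticipate a genuine obstacle: all the content is packed into Theorem~\ref{thm:kn} plus the already-known free resolution of $I_{j,n}$, and the corollary is simply the translation of the latter through the former.
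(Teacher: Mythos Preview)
Your proposal is correct and matches the paper's own treatment: the paper does not give a separate proof of this corollary but simply states that it follows from Theorem~\ref{thm:kn} (the equality $I_{k,n,i}=I_{j,n}$) together with the known Betti numbers of $I_{j,n}$ from \cite{SW09}, which is exactly your argument.
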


\begin{Remark}
From the graded Betti numbers we obtain the unreliability polynomial of the $j$-out-of-$n$ systems as 
\begin{eqnarray*}
f_{j,n}(x)=&\sum_{a=0}^{n-j}(-1)^a\beta_{a,a+j}(I_{j,n})x^{a+j}\\
=&\sum_{a=0}^{n-j}(-1)^a{{n} \choose {j+a}} {{a+j-1} \choose {j-1}}x^{a+j}.
\end{eqnarray*}
By truncating this polynomial we obtain bounds for the reliability formula.
The multigraded version is the following
\[
f_{j,n}(x)=\sum_{a=0}^{n-j}(-1)^a{{a+j-1} \choose {j-1}} \sum_{\alpha\in[n,j+a]}x^{\alpha}
\]
where $[n,j+a]$ denotes the set of vectors with $1$ in the indices of the $(j+a)$-subsets of $\{1,\dots,n\}$ and 0 in the other entries.
\end{Remark}

\begin{Example}
{\rm Let us consider a $k$-out-of-$n$:F system for $k=2$ and $n=8$. The  failure ideal $I_{2,8}$ of this system is generated by all the $28$ possible pairs of $8$ variables. The $\lcm$-ideals that describe the multiple failures of this system are given in Table~\ref{table:lcm}.

\begin{table}[h]
\footnotesize
\begin{tabular}{r|l}
$I_{2,8}$&$I_{2,8,1}$\cr
$I_{3,8}$&$I_{2,8,2}=I_{2,8,3}$\cr
$I_{4,8}$&$I_{2,8,4}=I_{2,8,5}=I_{2,8,6}$\cr
$I_{5,8}$&$I_{2,8,7}=I_{2,8,8}=I_{2,8,9}=I_{2,8,10}$\cr
$I_{6,8}$&$I_{2,8,11}=I_{2,8,12}=I_{2,8,13}=I_{2,8,14}=I_{2,8,15}$\cr
$I_{7,8}$&$I_{2,8,16}=I_{2,8,17}=I_{2,8,18}=I_{2,8,19}=I_{2,8,20}=I_{2,8,21}$\cr
$I_{8,8}$&$I_{2,8,22}=I_{2,8,23}=I_{2,8,24}=I_{2,8,25}=I_{2,8,26}=I_{2,8,27}=I_{2,8,28}$\cr
\end{tabular}
\medskip
\caption{lcm- ideals (left) and multicut ideals (right) of the failure ideals of $S_{2,8}$}
\label{table:lcm}
\end{table}

Figure \ref{fig:unreliabilitykn} shows the unreliability of i.i.d. $j$-out-of-$8$:F systems for $j$ from $2$ to $8$, each of which corresponds to the probability of multiple simultaneous failures of the original system. For example, the unreliability of $5$-out-of-$8$ system corresponds to $7,\,8,\,9$ or $10$ simultaneous failure events in the $2$-out-of-$8$ system.

Figure \ref{fig:unreliabilityfiltration} shows the probability that multiple failures appear in $S_{2,8}$ for three different component failure probabilities $p=0.2,\,0.5$ and $0.8$. This graph shows the {\em staircase} behaviour of $k$-out-of-$n$:F systems subject to several simultaneous failures.
For high failure probability, e.g. $p=0.8$, the probability of multiple failures drops slowly at the beginning but decreases more rapidly as the number of simultaneous failures grows. On the other hand, when the failure probability is lower, the probability of simultaneous failures drops drastically with fewer simultaneous failure events.

\begin{figure}[t]
\begin{center}
\includegraphics[scale=0.35]{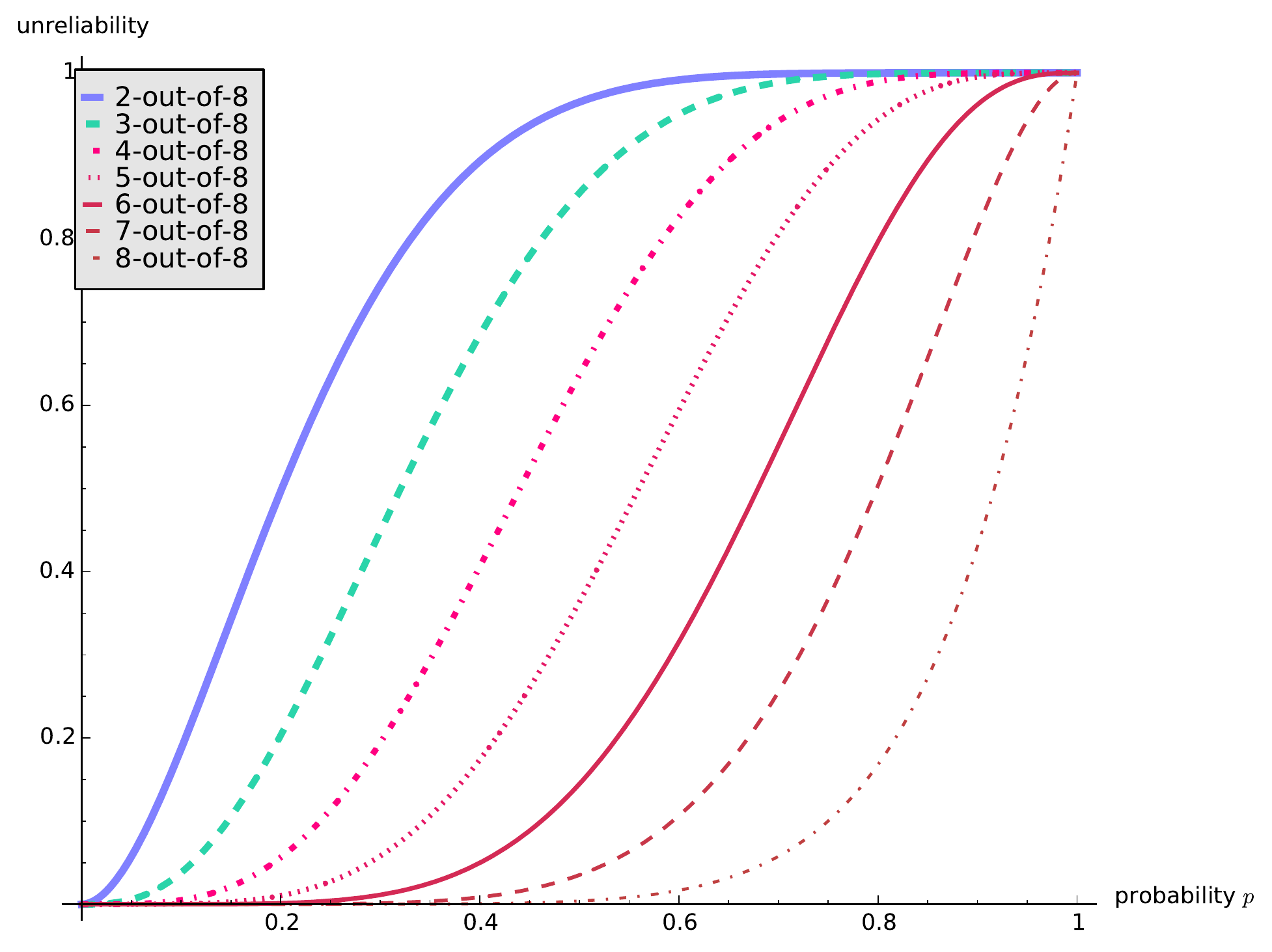}
\caption{Unreliability of i.i.d. $j$-out-of-$8$ systems for $j$ from $2$ to $8$.}\label{fig:unreliabilitykn}
\end{center}
\end{figure}

\begin{figure}[t]
\begin{center}
\includegraphics[scale=0.35]{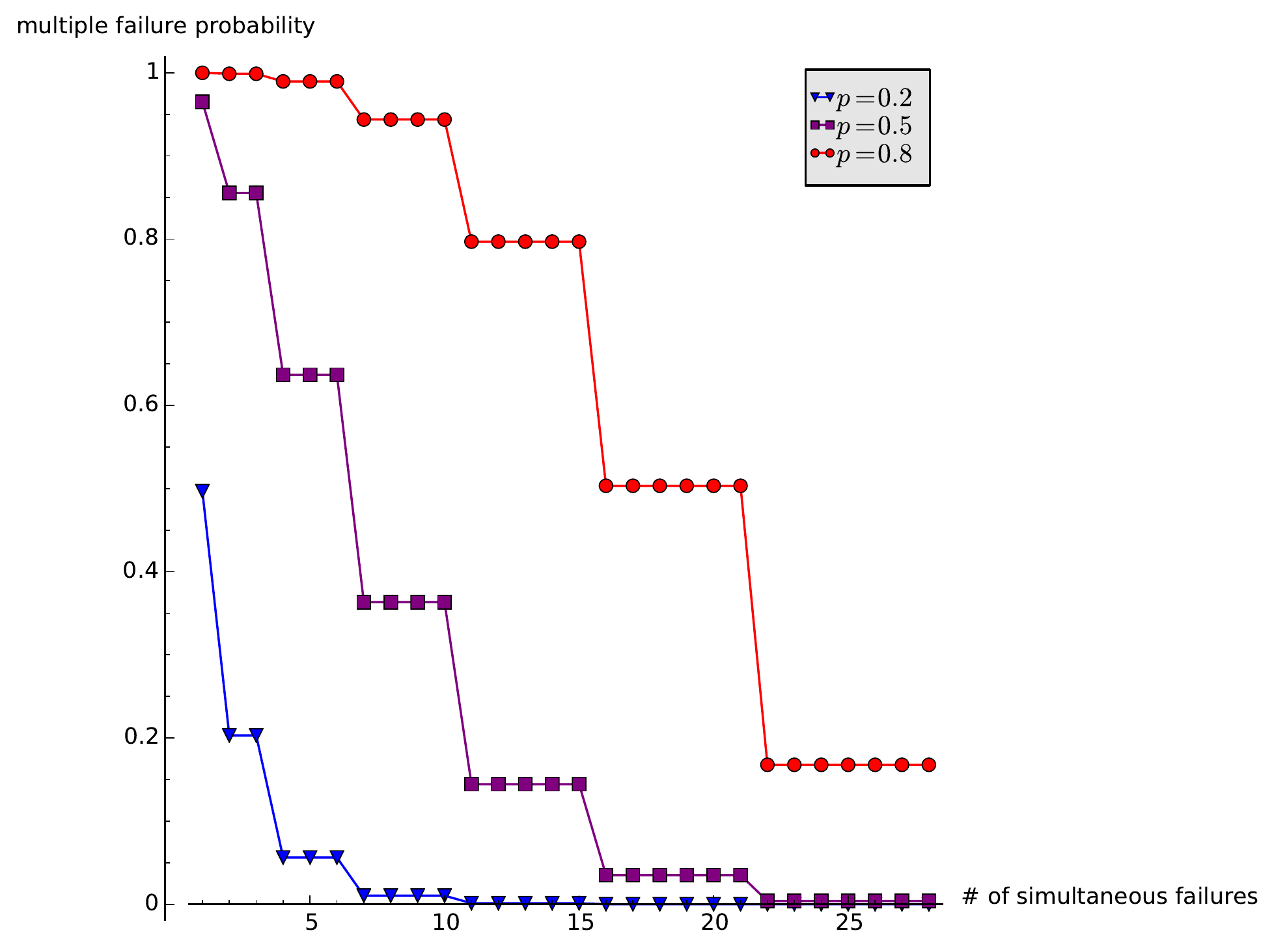}
\caption{Probability of simultaneous failures for the i.i.d. $2$-out-of-$8$ system with different component failure probabilities.}\label{fig:unreliabilityfiltration}
\end{center}
\end{figure}
}
\end{Example}

\section{Cons/$k$-out-of-$n$:F systems}\label{sec:consecutive}

A Cons/$k$-out-of-$n$:F system $T_{k,n}$ has $n$ components and fails whenever $k$ {\em consecutive} components fail. The ideal of $T_{k,n}$ is given by  $J_{k,n}=\langle x_1 \cdots x_k,x_2 \cdots x_{k+1}, \dots,x_{n-k+1} \cdots x_n\rangle$. Let $J_{k,n,i}$ be the $i$-fold $\lcm$-ideal of $J_{k,n}$, generated by the monomials corresponding to $i$ simultaneous minimal failures of $T_{k,n}$.

Let us denote by $\Sc$ the set of subsets of  $\{1,\dots,n-k+1\}$. For any $\sigma$ in $\Sc$ let $\min(\sigma)$ and $\max(\sigma)$ denote the minimum and maximum element of $\sigma$ respectively. Let $\Sc^i$ denote the elements of $\Sc$ of cardinality $i$. We say that $\sigma\in\Sc$ has a gap of size $s$ if there is a subset of $s$ consecutive elements of $\{\min(\sigma),\dots, \max(\sigma)\}$ that are not in $\sigma$. For instance, $\sigma= \{2,3,6,8\}$ has a gap of size $2$ (because $4$ and $5$ are not in $\sigma$) and a gap of size $1$ because $7$ is not in $\sigma$. Let $\Sc_a$ be the set of those $\sigma\in\Sc$ such that the smallest gap in $\sigma$ has size $a$. Let $\Sc^i_a$ be the set of all elements in $\Sc_a$ of cardinality $i$.

\begin{Theorem}\label{thm:conskn}
The ideal $J_{k,n,i}$ is minimally generated by the monomials $m_\sigma$ with $\sigma\in\Sc^i_0\cup\Sc^i_{k}\cup\Sc^i_{k+1}\cup\cdots$. In other words, the minimal generators of $J_{k,n,i}$ correspond to the $\lcm$'s of sets of monomials of cardinality $i$ with no gaps of sizes  $1, 2,\ldots,k-1$.
\end{Theorem}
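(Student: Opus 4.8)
The plan is to pass to the combinatorics of subsets of $\{1,\dots,n-k+1\}$. Write $g_j=x_jx_{j+1}\cdots x_{j+k-1}$ for the generators of $J_{k,n}$, and for $\sigma\subseteq\{1,\dots,n-k+1\}$ put $U(\sigma)=\bigcup_{j\in\sigma}\{j,j+1,\dots,j+k-1\}$, so that $m_\sigma=\lcm(g_j:j\in\sigma)=\prod_{t\in U(\sigma)}x_t$ and $m_\tau\mid m_\sigma$ iff $U(\tau)\subseteq U(\sigma)$. A subset $\sigma$ is \emph{good} (i.e. lies in $\Sc_0\cup\Sc_k\cup\Sc_{k+1}\cup\cdots$) exactly when no two consecutive elements $a<b$ of $\sigma$ satisfy $2\le b-a\le k$; equivalently, the maximal runs of consecutive integers inside $\sigma$, call them its clusters, are pairwise separated by gaps of size at least $k$. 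I would prove (A) the monomials $m_\sigma$ with $\sigma$ good and $|\sigma|=i$ generate $J_{k,n,i}$, and (B) each of them is a minimal generator, with distinct good $\sigma$ giving distinct monomials. Since the minimal generating set of a monomial ideal is contained in every monomial generating set, (A) and (B) together identify it.

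For (A) I would argue by descent. If $\tau\in\Sc^i$ is not good, choose consecutive elements $a<b$ of $\tau$ with $2\le b-a\le k$ and set $\tau'=(\tau\setminus\{b\})\cup\{a+1\}$; this again lies in $\Sc^i$ since $a+1\notin\tau$ and $a+1<b\le n-k+1$. Because $b-a\le k$ we have $\{a+1,\dots,a+k\}\subseteq\{a,\dots,a+k-1\}\cup\{b,\dots,b+k-1\}\subseteq U(\tau)$, so $U(\tau')\subseteq U(\tau)$ and hence $m_{\tau'}\mid m_\tau$; moreover $\sum_{j\in\tau'}j<\sum_{j\in\tau}j$. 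Iterating this swap terminates at a good $\sigma\in\Sc^i$ with $m_\sigma\mid m_\tau$, and since $J_{k,n,i}$ is generated by all $m_\tau$ with $\tau\in\Sc^i$, the good ones already generate it.

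For (B), let $\sigma$ be good with clusters $[a_1,b_1],\dots,[a_p,b_p]$. Since consecutive clusters are separated by at least $k$, the intervals $[a_t,b_t+k-1]$ are pairwise disjoint and are exactly the connected components of $U(\sigma)$. If $\tau\in\Sc^i$ satisfies $m_\tau\mid m_\sigma$, then for each $j\in\tau$ the block $\{j,\dots,j+k-1\}$ is a run of $k$ consecutive integers contained in $U(\sigma)$, hence lies inside a single component $[a_t,b_t+k-1]$; this forces $a_t\le j\le b_t$, so $j$ belongs to the cluster $[a_t,b_t]\subseteq\sigma$. Thus $\tau\subseteq\sigma$, and $|\tau|=|\sigma|=i$ gives $\tau=\sigma$. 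In particular $m_\sigma/x_t\notin J_{k,n,i}$ for every $t$ in the support of $m_\sigma$, so $m_\sigma$ is a minimal generator, and $m_\sigma=m_\tau$ with $\tau$ good forces $\tau=\sigma$.

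The one delicate point I expect is the reduction step in (A): one needs a move on non-good subsets that simultaneously preserves cardinality and membership in $\Sc$, does not enlarge $U(\tau)$, and strictly decreases in a well-founded order so as to terminate at a good set. The single swap $b\mapsto a+1$ does all three, the non-enlargement being precisely where the hypothesis $b-a\le k$ is used. Step (B) is then short, resting only on the observation that a run of $k$ consecutive integers cannot straddle two components of $U(\sigma)$ when $\sigma$ is good.
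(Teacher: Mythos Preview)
Your proof is correct and follows the same overall strategy as the paper's: show that every $m_\tau$ with $\tau$ not good is divisible by some $m_\sigma$ with $\sigma$ good, and show that each good $\sigma$ yields a minimal generator. The execution differs in two places worth noting. For the reduction step, the paper argues under a ``without loss of generality $\sigma$ has two blocks'' simplification and shifts a block boundary; your single-element swap $b\mapsto a+1$, with the potential $\sum_{j\in\tau}j$ as a termination measure, handles arbitrary block structure directly and makes termination transparent. For minimality, the paper treats $\Sc^i_0$ and $\Sc^i_a$ ($a\ge k$) by separate arguments (a degree count for the former, a variable-by-variable check for the latter, again under a two-block assumption), whereas your connected-components description of $U(\sigma)$ disposes of all good $\sigma$ uniformly and shows at once that $U(\tau)\subseteq U(\sigma)$ forces $\tau\subseteq\sigma$. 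The ideas are the same; your packaging is tighter and avoids the informal WLOG reductions.
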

\begin{proof}
Let $m_\sigma=\lcm(\{m_j\}_{j\in\sigma})$. We know that the set $\{m_\sigma,\,\sigma\in\Sc,\ \vert\sigma\vert=i\}$
is a generating set of $J_{k,n,i}$. Let $G_{k,n,i}$ be the minimal set of generators of $J_{k,n,i}$. To prove the theorem we must show that
\begin{enumerate}
\item[(i)] $\Sc^i_0\subseteq G_{k,n,i}$.
\item[(ii)] If $\sigma\in\Sc^i_a$ with $1\leq a \leq k-1$ then there is another element $\tau$ in $\Sc^i_b$ with $b=0$ or $b>a$ such that $m_\tau$ divides $m_\sigma$ (and hence $m_\sigma\notin G_{k,n,i}$ i.e. it is not minimal). 
\item[(iii)] If $\sigma\in\Sc^i_a$ with $a\geq k$ then $m_\sigma\in G_{k,n,i}$.
\end{enumerate}

To see (i) observe that if $\sigma\in\Sc^i_0$ then $m_\sigma$ is the $\lcm$ of $i$ consecutive minimal generators of $J_{k,n}$, say $m_{a},\dots,m_{a+i}$. Hence $m_\sigma=x_{a}\cdots x_{a+i+k-1}$ which is a product of $i+k-1$ variables. Using $i$ different subsets of $\{1,\dots,n-k+1\}$ we obtain at least $i+k-1$ variables. Note that we can only achieve this minimum if the $i$ elements of $\sigma$ are consecutive, i.e. if we are in $\Sc^i_0$. Hence, if there is another $\sigma'\in\Sc^i$ that divides $\sigma$ then it must be $\sigma$ itself as any other element of $\Sc^i_0$ produces a different set of variables and would not divide $m_\sigma$.

Now, to see (ii) let us assume without loss of generality that $\sigma$ is formed by two blocks $\sigma_1$ and $\sigma_2$ of consecutive elements, separated by a gap of size $a$ with $1\leq a\leq k-1$. For ease of notation we can consider $\sigma_1=\{1,\dots,i_1\}$ and $\sigma_2=\{i_1+a,\dots,i_2\}$. Then $m_\sigma=(x_1\cdots x_{i_1+k-1})\cdot (x_{i_1+a}\cdots x_{i_2+k-1}).$ But now, since $a<k$ we have that $\sigma'={\{1,\dots,i_1+1\}\cup\{i_1+a+1,\dots,i_2\}}$ is in $\Sc^i_b$ with $b\leq a$ (in fact it is either in $\Sc_0$ or in $\Sc_a$) and $m_{\sigma'}$ divides $m_\sigma$. In case $b\neq 0$, we repeat this procedure until we arrive at $b=0$, where $m_{\{ 1,\dots,i\}}$ divides $m_\sigma$ and hence $m_\sigma$ is not a minimal generator of $J_{k,n,i}$. Observe that the set $\{ 1,\dots,i\}$ is in $\Sc_0$. We could have also proceeded in a symmetric way, i.e. using $\sigma'={\{1,\dots,i_1-1\}\cup\{i_1+a-1,\dots,i_2\}}$ in $\Sc^i_b$ and we would obtain another element in $\Sc^i_0$ that divides $\sigma$.

Finally, to see (iii) let us assume again, without loss of generality, that  $\sigma_1=\{1,\dots,i_1\}$ and $\sigma_2=\{i_1+a,\dots,i_2\}$ with $a\geq k$. Now, if $\tau\in\Sc^i$ is different from $\sigma$ then there is at least one index $\tau_j$ such that $\tau_j\neq\sigma$. If $\tau_j>i_2$ then there exists at least a variable bigger than $x_{i_2+k-1}$ in $\tau$ which is not in $\sigma$ and hence $m_\tau$ does not divide $m_\sigma$. The same happens if $i_1<j<i_1+a$: since $a\geq k$ every $m_j$ with $i_1<j<i_1+a$ has a variable that does not belong to $\{x_1,\dots,x_{i_1+k-1}\}\cup \{x_{i_1+a},\dots, x_{i_2+k-1}\}$. To see this observe that $i_1+a>i_i+k-1$ and every $m_j$ with $i_1<j<i_1+a$ has at least one variable in the nonempty set $\{x_{i_1+1},\dots,x_{i_1+a-1}\}$. Hence $\tau$ does not divide $\sigma$ and so $\sigma$ is in $G_{k,n,i}$.
\end{proof}

\begin{Remark}
Each $\tau\subseteq\{1,\dots,n\}$ is given by blocks formed by subsequent indices (i.e. with no gaps among them), for instance, $123679$ is formed by a block of three indices $(123)$ a block of two indices $(67)$ and a block of one index $(9)$. From these blocks we can read the degree of the corresponding generator: if $\tau$ is formed by $l$ blocks of sizes $s_1, s_2, \dots,s_l$ then $m_\tau$ has degree $(s_1+1)+(s_2+1)+\cdots+(s_l+1)$.
\end{Remark}

\begin{Example}\label{ex:294}
{\rm 
$J_{2,9,4}$ is minimally generated by $26$ monomials that correspond to taking $\lcm$'s of the following sets of generators of $J_{2,9}$:
\begin{table}[h]
\footnotesize
\begin{tabular}{c|c|c}
Block sizes&sets&deg. of gens. \\
\hline
4&1234,2345,3456,4567,5678&5\\
\hline
3,1&1236,1237,1238,2347,2348,3458&6\\
 &1456,1567,2567,1678,2678,3678&6\\
\hline
2,2&1256,1267,1278,2367,2378,3478&6\\
\hline
2,1,1&1258,1458,1478&7\\
\end{tabular}
\end{table}

Observe  that $1236$ in the table means $\lcm$ of generators $1,2,3$ and $6$ (i.e. $x_1x_2,x_2x_3,x_3x_4$ and $x_6x_7$), so $1236$ corresponds to $x_1x_2x_3x_4x_6x_7$, which means that the failure of the components $1,2,3,4,6$ and $7$ is a minimal $4$-multicut for $T_{2,9}$.
}
\end{Example}

\begin{Example}
{\rm 
Let us consider a Cons/$2$-out-of-$9$:F system. Its failure ideal is $J_{2,9}=\langle x_1x_2,x_2x_3,x_3x_4,x_4x_5,x_5x_6,x_6x_7,x_8x_9\rangle.$
Figure \ref{fig:consUnrel} shows on one hand the unreliability polynomials of Cons/$i$-out-of-$9$:F systems for $i$ between $2$ and $9$, and on the other hand the polynomials corresponding to $i$ multiple simultaneous failures of a Cons/$2$-out-of-$9$:F system for $i$ from $1$ to $8$. Observe that the multiple failures are, unlike in the ordinary $k$-out-of-$n$:F case, different from other Cons/$k$-out-of-$n$:F systems; the graphs reflect this different behaviour.

\begin{figure}[t]
\begin{center}
\includegraphics[scale=0.35]{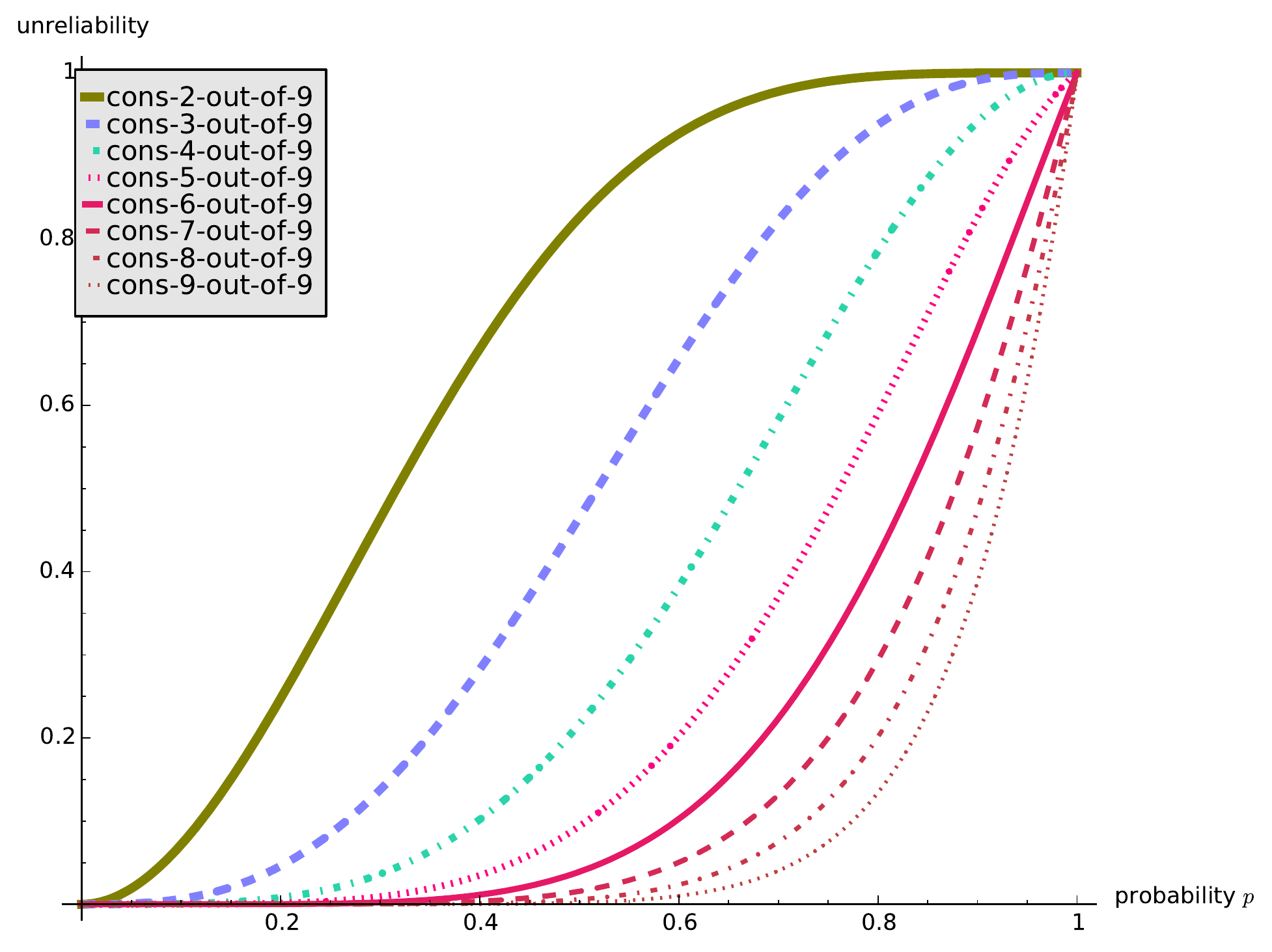}
\includegraphics[scale=0.35]{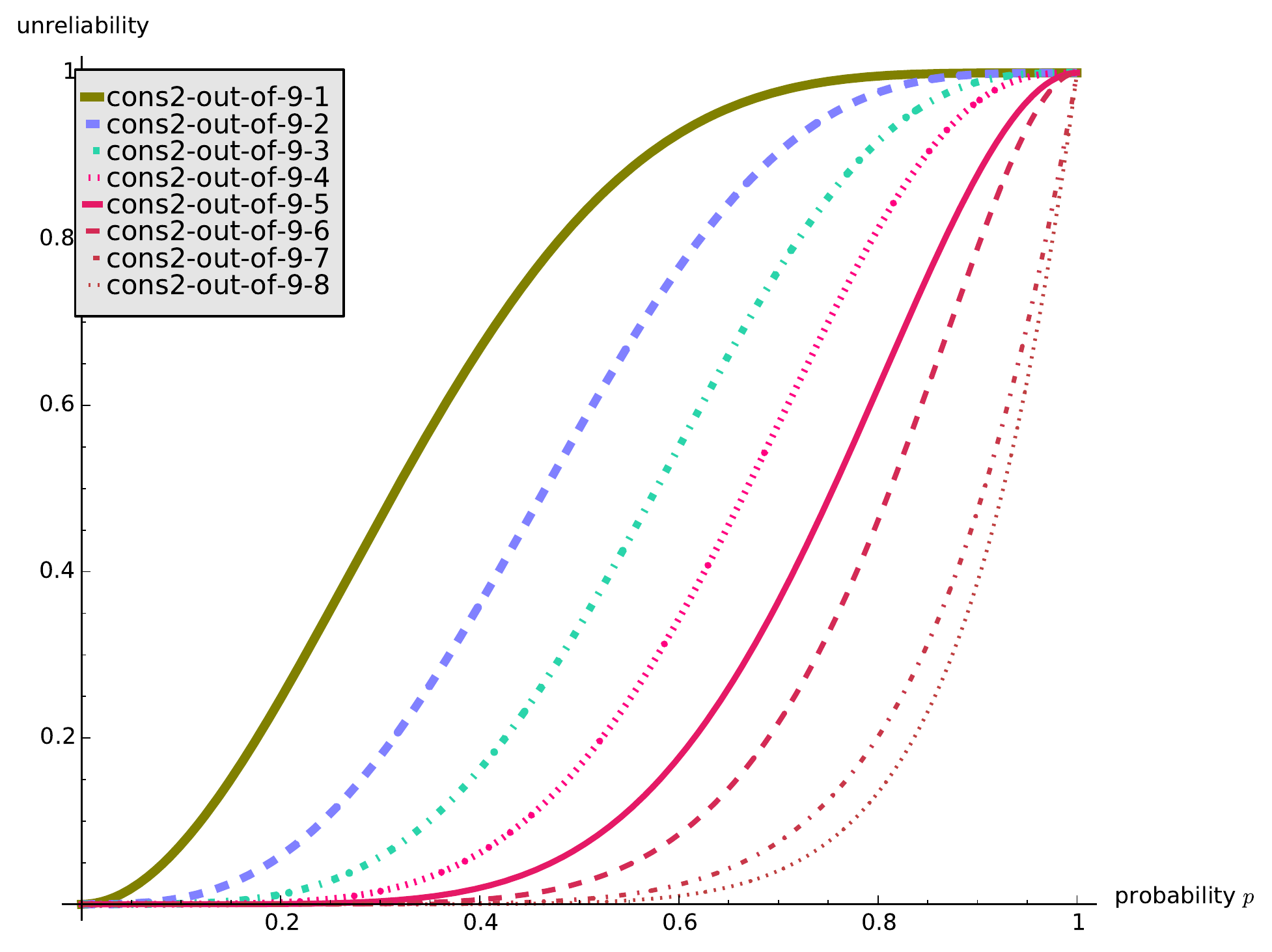}
\caption{Unreliability of Cons/$i$-out-of-$9$:F systems for $2\leq i\leq 9$ and probability of $i$ simultaneous failure events of Cons/$2$-out-of-$9$:F systems for $1\leq i\leq 8$.}
\label{fig:consUnrel}
\end{center}
\end{figure}

If we consider the probability of $i$ simultaneous failure events for a Cons/$k$-out-of-$n$:F system with i.i.d components we observe that this system does not show the staircase behaviour of ordinary $k$-out-of-$n$:F systems. This is seen in Figure \ref{fig:unreliabilityfiltrationcons}, that shows how the probability that multiple failures appear in the i.i.d. Cons/$2$-out-of-$9$:F system for three different component probabilities $p=0.2,\,0.5$ and $0.8$. 

\begin{figure}[t]
\begin{center}
\includegraphics[scale=0.35]{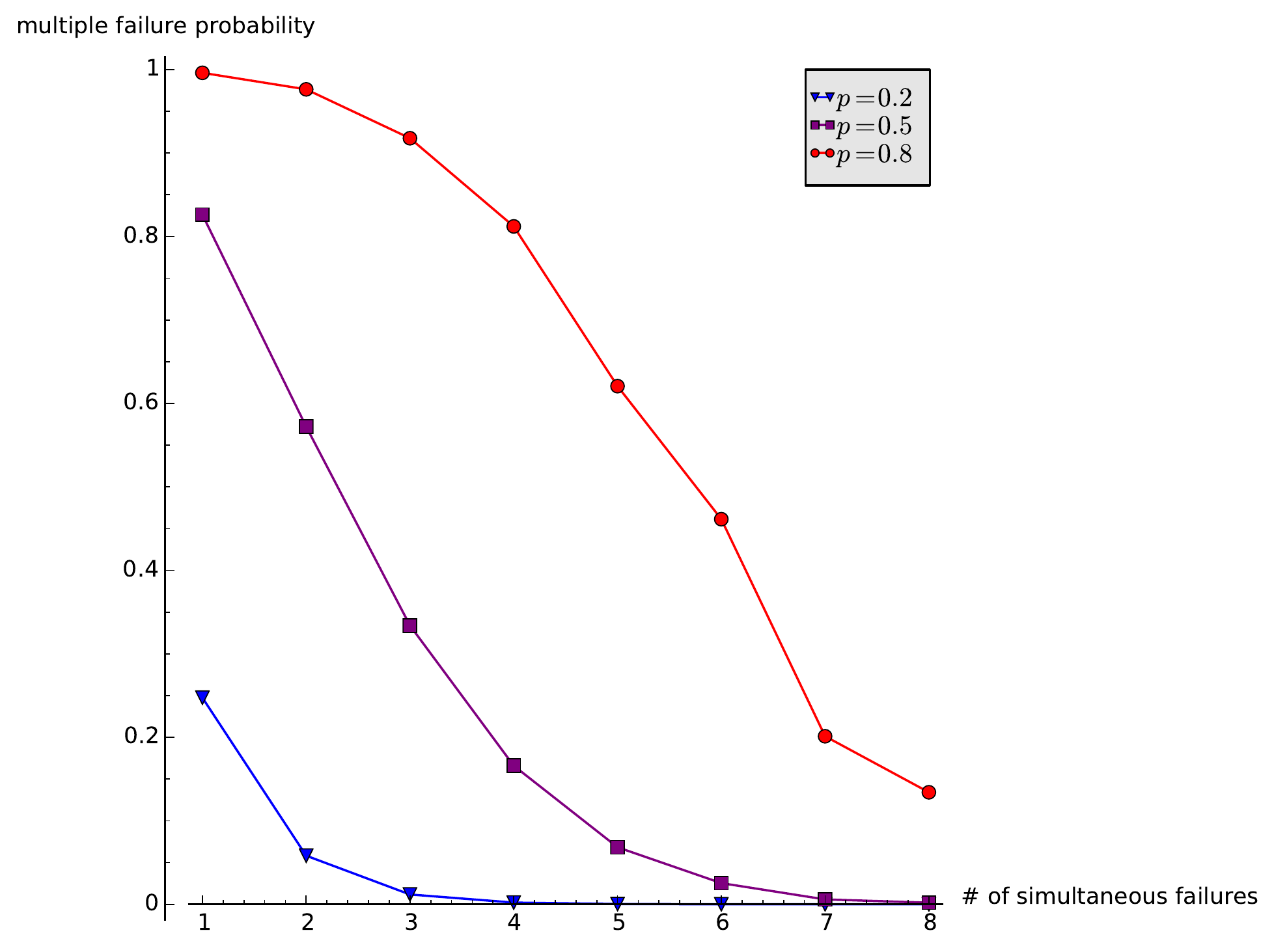}
\caption{Probability of simultaneous failures for i.i.d. Cons/$2$-out-of-$9$:F systems with different component failure probabilities.}\label{fig:unreliabilityfiltrationcons}
\end{center}
\end{figure}
}
\end{Example}

\section {Computational experiments} \label{sec:computations}


\subsection {Number of generators} \label{sub:numgens}
One of the main drawbacks of the use of enumerative methods for the reliability analysis of systems with multiple simultaneous failures is that the number of possible combinations of such simultaneous failures grows exponentially. For a system with $r$ minimal failure events (cuts) the number of $i$-multicuts can be ${r \choose i}$. However, in most systems, the actual number is much lower, there is redundancy in those ${r\choose i}$ combinations of cuts. We then face two problems: the big number of possible cut combinations and how to discard the redundant ones. 

In the case of $k$-out-of-$n$:F and Cons/$k$-out-of-$n$:F systems, our methods efficiently enumerate the minimal $i$-multicuts of the system. Since $k$-out-of-$n$:F systems with $i$ simultaneous failures correspond to another $j$-out-of-$n$:F system, we can count the generators of the corresponding ideal using Theorem~\ref{thm:kn}. The corresponding number of generators is much smaller than the possible number ${ {n\choose k} \choose i}$ as indicated in Table \ref{table:genskn} for the $2$-out-of-$8$ system (showing the systems up to 10 simultaneous failures).

\begin{table*}[!t]
\footnotesize
\begin{center}
\begin{tabular}{c|c|c|c|c|c|c|c|c|c|c}
$i$&1&2&3&4&5&6&7&8&9&10\\
\hline
${ {n\choose k} \choose i}$&28&378&3276&20475&98280&376740&1184040&3108105&6906900&13123110 \\
\hline
${n \choose j}$&28&56&56&70&70&70&56&56&56&56\\
\end{tabular}\caption{Number of minimal generators for the ideals of $2$-out-of-$8$ system with $i$ simultaneous failures, $i=1\dots,10$.}
\label{table:genskn}
\end{center}
\end{table*}

The list of generators of the iterated Cons/$k$-out-of-$n$:F ideals are given by Theorem~\ref{thm:conskn}. Counting them for given $k$, $n$ and $i$ amounts to count all the subsets of $\{1,\dots, n-k+1\}$ of cardinality $i$ having no gaps of sizes between $1$ and $k-1$.
\begin{itemize}
\item[-]Since the sets we are considering are sorted, the initial element can only be $1$ up to $n-k-i+1$. For each such initial element $m$, we can consider the sum of all the gaps in the set to be at most $n-k-i+1$.
\item[-] Each of such total gaps can be obtained using at most $i-1$ summands (the gaps in a set of cardinality $i$ can at most be $i-1$) each of which has to be at least $k$. For each number $a\leq i-1$ of summands this is equivalent to count all the compositions of the integer $k$ in at most $i$ parts of smallest size $k$. This number is known to be ${{m-(k-1)a-1} \choose {a-1}}$, cf. \cite{A84,KM13}.
\item[-] Each of these compositions can be used in ${{i-1}\choose{i-1-a}}$ ways.
\end{itemize}
From these considerations we obtain the following formula for the number of generators of the $i$-th $\lcm$-ideal of a Cons/$k$-out-of-$n$:F system. This formula has a summand corresponding to the sets in $\Sc^i_0$ and another one to those in $\Sc^i_m$ for $m>0$.
\begin{Proposition}\label{prop:numgensconskni}
The number of generators of the $i$-fold $\lcm$-ideal of a Cons/$k$-out-of-$n$:F system is
\begin{eqnarray*}
N_{k,n,i}&=\sum_{m=1}^{n-k-i+1}(n-k-i+2-m)
\sum_{a=1}^{i-1}{{i-1}\choose{i-1-a}}{{m-(k-1)a-1}\choose{a-1}}\\
&+(n-k-i+2).
\end{eqnarray*}
\end{Proposition}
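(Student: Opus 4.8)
\emph{Proof plan.} By Theorem~\ref{thm:conskn} (together with the injectivity of $\sigma\mapsto m_\sigma$ that is implicit in the divisibility arguments of its proof), $N_{k,n,i}$ equals the number of subsets $\sigma\subseteq\{1,\dots,n-k+1\}$ with $|\sigma|=i$ having no gap of any of the sizes $1,2,\dots,k-1$. The first step is to describe such a $\sigma$ structurally: write it as the disjoint union of its maximal blocks of consecutive integers; the ``no small gap'' condition says exactly that every gap between two successive blocks has size $\ge k$. So a $\sigma$ of this kind is encoded bijectively by the data $(m_1;\,b_1,\dots,b_{a+1};\,g_1,\dots,g_a)$, where $a+1\ge 1$ is the number of blocks (hence $0\le a\le i-1$), $m_1=\min(\sigma)$, the block sizes satisfy $b_j\ge 1$ and $b_1+\cdots+b_{a+1}=i$, and the gap sizes satisfy $g_j\ge k$; this really is a bijection because each $g_j\ge 1$, so the reconstructed set breaks into the prescribed blocks. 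Writing $m:=g_1+\cdots+g_a$ for the total gap, one has $\max(\sigma)=m_1+(i-1)+m$, so the condition $\sigma\subseteq\{1,\dots,n-k+1\}$ is precisely $m_1\ge 1$ and $m_1\le n-k-i+2-m$.

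Next I would split the count according to the number of blocks. If $a=0$ (a single block, i.e.\ $\sigma\in\Sc^i_0$) then $m=0$ and the only choice is $m_1\in\{1,\dots,n-k-i+2\}$, contributing the term $n-k-i+2$. For $a\ge 1$ (i.e.\ $\sigma\in\bigcup_{b\ge k}\Sc^i_b$) I would reorganize the sum so that the outer index is the total gap $m$: for fixed $a$ and $m$, the block sizes range over the compositions of $i$ into $a+1$ positive parts, of which there are ${i-1\choose a}={i-1\choose i-1-a}$; the gap sizes range over the compositions of $m$ into $a$ parts each $\ge k$, of which there are ${m-ka+a-1\choose a-1}={m-(k-1)a-1\choose a-1}$; and, independently, $m_1$ ranges over the $n-k-i+2-m$ admissible values. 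Multiplying these three counts and summing over $1\le a\le i-1$ and $1\le m\le n-k-i+1$ gives the double sum of the statement; adding the single-block term yields $N_{k,n,i}$.

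I do not expect a genuine obstacle here; the one point needing care is precisely the reindexing in the second step. The naive enumeration ``for each $m_1$, for each admissible $m$'' does not telescope to a closed form, whereas grouping by $m$ (weighted by the $n-k-i+2-m$ positions $m_1$ it allows) does. One then has to pin down the summation ranges: the upper bound $m\le n-k-i+1$ must not be raised, since for larger $m$ the factor $n-k-i+2-m$ would turn negative instead of correctly recording that no configuration fits; the lower bound, on the other hand, is immaterial because under the standard combinatorial convention ${x\choose y}=0$ unless $0\le y\le x$, one has ${m-(k-1)a-1\choose a-1}=0$ as soon as $m<ka$, so the extra terms vanish. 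With this convention the degenerate cases are automatic (for $i=1$ the double sum is empty and the formula correctly returns $n-k+1$). As a final sanity check I would evaluate the formula for $k=2$, $n=9$, $i=4$: it gives $21+5=26$, matching the explicit list of generators of $J_{2,9,4}$ in Example~\ref{ex:294}.
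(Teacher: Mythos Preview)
Your proposal is correct and follows essentially the same approach as the paper: encode each admissible $\sigma$ by its starting position, block sizes, and gap sizes, then count via compositions (of $i$ into $a+1$ positive parts for the blocks, and of the total gap $m$ into $a$ parts each $\ge k$ for the gaps), summing over $m$ weighted by the number $n-k-i+2-m$ of admissible starting positions. Your version is in fact more careful than the paper's sketch---you make explicit the bijection, the injectivity of $\sigma\mapsto m_\sigma$ on the minimal-generator set (needed to equate ``number of generators'' with ``number of such $\sigma$''), and the vanishing convention on binomial coefficients that renders the lower summation bound on $m$ harmless.
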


Table \ref{table:gensconskn} shows that the numbers of generators in this case are also much smaller than the corresponding binomial coefficients. The time needed for enumerating these sets (i.e. the minimal $i$-multicuts for all $i$) is not neglectable when compared to the time needed to compute the unreliability polynomial and bounds, as explained in \ref{subsec:comprel} . This makes the problem of counting the minimal multicuts more relevant.

\begin{table*}[!t]
\footnotesize
\begin{center}
\begin{tabular}{c|c|c|c|c|c|c|c|c|c|c}
$i$&1&2&3&4&5&6&7&8&9&10\\
\hline
${ {n-k+1} \choose i}$&16&120&560&1820&4368&8008&11440&12870&11440&8008 \\
\hline
$\#generators$&16&70&124&151&148&126&100&79&56&34\\
\end{tabular}\caption{Number of minimal generators for Cons/$5$-out-of-$20$:F ideals with $i$ simultaneous failures, $i=1\dots,10$.}
\label{table:gensconskn}
\end{center}
\end{table*}


\subsection{Computing reliabilities}\label{subsec:comprel}
We now focus on computing the unreliability and bounds for systems with multiple simultaneous failures. For structured systems such as $k$-out-of-$n$ or series-parallel systems, there are several well-known formulae and methods (cf. \cite{KZ03} and the survey on Cons/$k$-out-of-$n$ systems in \cite{DB15} for the i.i.d. case). When the systems have no apparent structure or no known formulae for reliability and bounds, the authors proposed the algebraic method in \cite{SW09} which can be applied in general and is based on computing the Hilbert series of a monomial ideal. Although this is a \#P-hard problem (in fact it is in the same complexity class of the computation of the reliability of a system, cf. \cite{B86}), there are good algorithms available in computer algebra systems.
We present in Table \ref{table:computations_2-20} computing times for simultaneous failures of the Cons/$2$-out-of-$20$:F system. The first column shows the number $i$ of simultaneous failures for this system. The second column shows the number of minimal generators of the corresponding monomial ideal $J_{k,n,i}$ i.e. the number of minimal $i$-multicuts (obtained using Proposition \ref{prop:numgensconskni}). Columns {\tt Binomials} and {\tt Formula} show the times used for computing the set of minimal generators of the ideal with two different methods. The first one uses all ${19 \choose i}$ possible combinations of $i$ of the $19$ generators of $J_{2,20}$ and then eliminates the redundant ones. The second one makes use of Theorem \ref{thm:conskn}. The difference between these two columns is particularly significant at the central rows of the table, where the number of generators is bigger and so is the corresponding binomial number ${19 \choose i}$. Theorem \ref{thm:conskn} was implemented in the computer algebra system Macaulay2 (\cite{M2}).
The last three columns of  the table show the timings of the reliability-related computations. Column {\tt Hilbert} shows the computation of the Hilbert series using the algorithms in Macaulay2. The Hilbert series provides the reliability polynomial but in a form that is not suitable for obtaining bounds by truncating terms of the polynomial. Column {\tt Resolution} shows times for computing the minimal free resolution of the corresponding ideals using Macaulay2. This gives a form of the reliability polynomial suitable for bounds. Also suitable for bounds is the form of the reliability polynomial given by the Mayer-Vietoris resolution (see \cite{S09}). The time of computation of this resolution (which is often minimal) is given in column {\tt MVT} using the algorithm in CoCoALib (\cite{CoCoALib}). All times are in seconds and all computations were perfumed on an HP Zbook with Intel Core i7 processor and 16Gb RAM. Observe that the fastest computation is that of the Hilbert function, which provides less information than the other two. Among these two complete computations (which provide reliability formulae and bounds) {\tt MVT} is much faster, in particular when few simultaneous failures occur. The reason is that the {\tt MVT} algorithm is more sensitive to the growth in number of generators than the Macaulay2 algorithm for minimal resolutions which is more sensitive to longer resolutions. Since the length of the resolution is approximately $n-i$, the first part of the table has longer resolutions, i.e., the times in {\tt Resolution} column grow higher, and in the central part the number of generators is much bigger and the difference between {\tt Resolution} and {\tt MVT} is reduced. For the usual case of few simultaneous failures, the Mayer-Vietoris algorithm is more convenient.

\begin{table*}[!t]
\footnotesize
\begin{center}
\begin{tabular}{c|r|r|r|r|r|r}
$i$&\# of gens. & Binomials & Formula &{\tt Hilbert}& {\tt Resolution}&{\tt MVT}\\
\hline
1&19&0.001&0.001&0.209&2.139&0.024\\
2&154&0.008&0.024&0.096&27.840&0.660\\
3&712&0.063&0.103&0.168&81.742&1.432\\
4&2138&0.324&0.651&0.333&144.514&2.676\\
5&4537&1.753&1.163&0.524&201.975&6.452\\
6&7248&4.531&2.242&0.753&195.633&14.424\\
7&9143&10.012&5.086&0.746&148.763&25.952\\
8&9434&18.112&5.208&0.725&103.777&24.664\\
9&8169&32.283&7.065&0.556&60.028&15.932\\
10&6046&34.076&4.138&0.336&23.037&7.036\\
11&3874&33.413&2.566&0.246&10.849&2.652\\
12&2164&25.540&2.025&0.075&4.573&0.780\\
13&1067&15.375&1.501&0.039&0.484&0.176\\
14&448&6.946&0.263&0.016&0.097&0.032\\
15&180&2.086&0.113&0.006&0.014&0.012\\
16&49&0.449&0.032&0.002&0.002&0\\
17&19&0.082&0.011&0.001&0.001&0\\
18&2&0.001&0.001&0&0&0\\
19&1&0.001&0.001&0&0&0\\
\hline
{\bf Total:}& -- &185.065&32.194&4.831&1005.168&102.908\\

\end{tabular}\caption{Time for computing the generators, Hilbert series and resolutions of multiple failure ideals of the Cons/$2$-out-of-$20$ system.}
\label{table:computations_2-20}
\end{center}
\end{table*}

\section{Conclusions}
The main computational tasks that one has to face when computing the reliability of systems subject to multiple simultaneous failures are two: the first is to compute the minimal failure events, which grow exponentially with respect to the number of minimal failures of the initial system. In algebraic terms, this is equivalent to the enumeration of the minimal generating set of the corresponding monomial ideals. The second task is to obtain reliability formulae and bounds. 

In systems with low redundancy, such as consecutive $2$-out-of-$n$ for large $n$, the first problem is computationally expensive. In fact it can be even more expensive than the actual reliability formulae computations. This can be seen by comparing the {\tt Total} row of columns {\tt Binomial} and {\tt MVT} in Table \ref{table:computations_2-20}. The importance of results such as Theorem \ref{thm:conskn} and Theorem \ref{thm:kn} is clearer at the sight of the computation times.

With respect to reliability computations, a general approach such as the algebraic one can be useful when closed form formulae are not available, in particular when the components have different failure probabilities, which is the main assumption in many approaches, cf. \cite{DB15}. Since this method is general, improvements in its implementations, like the {\tt MVT} algorithm in CoCoALib may result in a useful tool for reliability analysis.
\bibliographystyle{betta}

\bibliography{refs}

\end{document}